\documentclass[11pt,letterpaper,reqno]{amsart}
\usepackage{amsmath,amssymb,amsthm}
\newcommand{\C}{{\mathbb C}}
\newcommand{\F}{{\mathbb F}}
\newcommand{\Z}{{\mathbb Z}}
\newcommand{\conj}[1]{{\overline{#1}}}
\newcommand{\ceil}[1]{\lceil{#1}\rceil}
\newcommand{\sums}[1]{\sum_{\substack{#1}}}
\newcommand{\e}{\epsilon}
\newcommand{\leg}[2]{\mbox{$({#1}\!\mid\!{#2})$}}
\newcommand{\h}[1]{\hspace{#1}}
\newtheorem{theorem}{Theorem}[section]
\newtheorem{lemma}[theorem]{Lemma}
\newtheorem{corollary}[theorem]{Corollary}

\title{Littlewood Polynomials with Small $L^4$ Norm}
\author{Jonathan Jedwab, Daniel J. Katz, and Kai-Uwe Schmidt}
\date{17 June 2011 (revised 25 April 2013)}
\begin{document}

\thanks{J.~Jedwab and D.J.~Katz are with Department of Mathematics, Simon Fraser University, 8888 University Drive, Burnaby BC V5A 1S6, Canada. K.-U.~Schmidt was with Department of Mathematics, Simon Fraser University and is now with Faculty of Mathematics, Otto-von-Guericke University, Universit\"atsplatz~2, 39106 Magdeburg, Germany.
Email: {\tt jed@sfu.ca}, {\tt daniel\_katz\_2@sfu.ca}, {\tt kaiuwe.schmidt@ovgu.de}.}

\thanks{J.~Jedwab is supported by NSERC}
\thanks{K.-U. Schmidt was supported by German Research Foundation.}

\maketitle

\section*{Abstract}
Littlewood asked how small the ratio $||f||_4/||f||_2$ 
(where $||\cdot||_\alpha$ denotes the $L^\alpha$ norm on the unit circle) 
can be for polynomials~$f$ having all coefficients in $\{1,-1\}$,
as the degree tends to infinity.
Since 1988, the least known asymptotic value of this ratio has been 
$\sqrt[4]{7/6}$, which was conjectured to be minimum.
We disprove this conjecture by showing that there is a sequence of 
such polynomials, derived from the Fekete polynomials, 
for which the limit of this ratio is less than $\sqrt[4]{22/19}$.

\section{Introduction}
The $L^\alpha$ norm on the unit circle of polynomials having all coefficients 
in $\{1,-1\}$ (\emph{Littlewood polynomials}) has attracted sustained 
interest over the last sixty years \cite{Shapiro}, \cite{Erdos-57}, \cite{Rudin}, \cite{Newman}, \cite{Littlewood-66}, \cite{Littlewood-68}, \cite{Sarwate}, \cite{Newman-Byrnes}, \cite{Beck}, \cite{Borwein-Lockhart}.
For $1 \leq \alpha < \infty$, the $L^\alpha$ norm of a polynomial $f \in \C[z]$ on the unit circle is 
\[
||f||_\alpha=\left(\frac{1}{2\pi} \int_0^{2 \pi} |f(e^{i\theta})|^\alpha \,d\theta\right)^{1/\alpha},
\]
while $||f||_\infty$ is the supremum of $|f(z)|$ on the unit circle.
The norms $L^1$, $L^2$, $L^4$, and $L^\infty$ are of particular interest 
in analysis.

Littlewood was interested in how closely the ratio $||f||_\infty/||f||_2$ can 
approach $1$ as $\deg(f)\to\infty$ for $f$ in the set of polynomials now 
named after him~\cite{Littlewood-66}. 
Note that if $f$ is a Littlewood polynomial, then $||f||_2^2$ is 
$\deg(f)+1$.
In view of the monotonicity of $L^\alpha$ norms,
Littlewood and subsequent researchers used $||f||_4/||f||_2$ 
as a lower bound for $||f||_\infty/||f||_2$. 
The $L^4$ norm is particularly suited to this purpose because 
it is easier to calculate than most other $L^\alpha$ norms.
The $L^4$ norm is also of interest in the theory of communications, 
because $||f||_4^4$ equals the sum of squares of the aperiodic 
autocorrelations of the sequence 
formed from the coefficients of~$f$ \cite[eqn.~(4.1)]{Hoholdt-Jensen}, \cite[p.~122]{Borwein};
in this context one considers the {\it merit factor} 
$||f||_2^4/(||f||_4^4-||f||_2^4)$.
We shall express merit factor results in terms of 
$||f||_4/||f||_2$.

If $||f||_4/||f||_2$ is bounded away from $1$, then so is $||f||_\infty/||f||_2$, which would prove a modification of a conjecture due to Erd\H{o}s  \cite{Erdos-62}, \cite[Problem~22]{Erdos-57} asserting that there is some $c > 0$ such that $||f||_\infty/||f||_2 \geq 1+c$ for all non-constant polynomials $f$ whose coefficients have absolute value~$1$. It is known from Kahane's work that there is no such~$c$~\cite{Kahane}, but the modified conjecture where $f$ is restricted to be a Littlewood polynomial remains resistant \cite{Newman-Byrnes}.

Littlewood regarded calculations carried out by
Swinnerton-Dyer as evidence that the ratio $||f||_4/||f||_2$ can be 
made arbitrarily close to~$1$ for Littlewood polynomials~\cite{Littlewood-66}. 
However, he could prove nothing stronger than that this ratio is asymptotically
$\sqrt[4]{4/3}$ for the Rudin-Shapiro polynomials
\cite[Chapter III, Problem 19]{Littlewood-68}.
H{\o}holdt and Jensen, building on studies due to Turyn 
and Golay~\cite{Golay-83}, proved in 1988 that
this ratio is asymptotically $\sqrt[4]{7/6}$ for a sequence of Littlewood
polynomials derived from the Fekete polynomials~\cite{Hoholdt-Jensen}.
H{\o}holdt and Jensen conjectured that no further reduction in the
asymptotic value of $||f||_4/||f||_2$ 
is possible for Littlewood polynomials. 
Although Golay conjectured, based on heuristic reasoning, that the
minimum asymptotic ratio $||f||_4/||f||_2$ for Littlewood polynomials
is approximately $\sqrt[4]{333/308}$ \cite{Golay-82}, he later
cautioned that ``the eventuality must be considered that no systematic synthesis will ever be found which will yield [a smaller asymptotic ratio than $\sqrt[4]{7/6}$]''~\cite{Golay-83}. 

Indeed, for more than twenty years $\sqrt[4]{7/6}$ has remained the smallest 
known asymptotic value of $||f||_4/||f||_2$ for a sequence of 
Littlewood polynomials~$f$.
We shall prove that this is not the minimum asymptotic value.

\begin{theorem}\label{Introductory-Theorem}
There is a sequence $h_1, h_2, \ldots$ of Littlewood polynomials with 
$\deg(h_n) \to \infty$ and $||h_n||_4/||h_n||_2 \to \sqrt[4]{c}$ as $n\to\infty$, where $c < 22/19$ is the smallest root of $27 x^3 -498 x^2 + 1164 x -722$.
\end{theorem}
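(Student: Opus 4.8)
The plan is to realise each $h_n$ as a Littlewood polynomial whose coefficient sequence is built from the Legendre symbol sequence of a large prime $p$ by two operations --- a cyclic rotation and a periodic appending --- and then to choose the two parameters governing these operations so as to minimise the limiting value of $||h||_4/||h||_2$. Concretely, for an odd prime $p$ let $\ell_j=\leg{j}{p}$, extended periodically in $j\in\Z$. For real parameters $\tau\in[0,1)$ and $\gamma\geq 1$ put $r=\lfloor\tau p\rfloor$ and $m=\lfloor\gamma p\rfloor$, and let $h$ be the polynomial $\sum_{j=0}^{m-1}\ell'_{j+r}\,z^j$, where $\ell'$ agrees with $\ell$ except that each zero coefficient is replaced by $+1$ (for fixed $\gamma$ there are only $O(1)$ such indices in the window, which will perturb the sum of squared autocorrelations by $o(p^2)$). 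Taking $\gamma=1$, $\tau=1/4$ recovers the H{\o}holdt--Jensen construction with limiting ratio $\sqrt[4]{7/6}$; it is the freedom to take $\gamma>1$, appending part of a second period, that will allow us to do strictly better.

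First I would use the identity $||h||_4^4=\sum_k C(k)^2$, where $C(k)=\sum_j \ell'_{j+r}\ell'_{j+r+k}$ is the aperiodic autocorrelation and $C(0)=m=||h||_2^2$, so that $||h||_4^4/||h||_2^4=1+2\sum_{k\geq 1}C(k)^2/m^2$. The heart of the argument is the asymptotic evaluation of $\sum_{k\geq 1}C(k)^2$. Each $C(k)$ is an incomplete sum of the products $\ell_j\ell_{j+k}=\leg{j(j+k)}{p}$, and over a full period this sum equals $-1$; over a window of length $m$ it therefore equals a main term, linear in the overlap length and arising from this periodic value, plus an incomplete-character-sum error. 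The P\'olya--Vinogradov inequality (with Weil's bound for the relevant complete quadratic sums) bounds these errors by $O(\sqrt{p}\log p)$ uniformly in $k$, which is exactly what is needed to show that, when the $C(k)^2$ are summed over the $O(p)$ values of $k$, the error terms contribute only $o(p^2)$. Collecting the main terms then gives $\sum_{k\geq 1}C(k)^2=p^2\,G(\tau,\gamma)+o(p^2)$ for an explicit piecewise-polynomial function $G$.

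Since $||h||_2^2=m\sim\gamma p$, this yields $||h||_4^4/||h||_2^4\to\Phi(\tau,\gamma):=1+2G(\tau,\gamma)/\gamma^2$ as $p\to\infty$ with $\tau,\gamma$ fixed. It then remains to minimise $\Phi$ over the admissible region $\tau\in[0,1)$, $\gamma\geq 1$. Setting both partial derivatives to zero gives a polynomial system in $(\tau,\gamma)$; I would eliminate $\tau$ to reduce to a single equation, and expect the value $x=\Phi$ at the minimiser to satisfy $27x^3-498x^2+1164x-722=0$. Taking $c$ to be the smallest real root, verifying that it is the global minimum, and comparing directly confirms $c<22/19$. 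Finally, because $\Phi$ is continuous, choosing rational $\tau,\gamma$ arbitrarily close to the optimiser makes the limiting ratio arbitrarily close to $\sqrt[4]{c}$; a diagonal argument, selecting for each $n$ a prime $p_n\to\infty$ and rational parameters tending to the optimiser quickly enough, then produces a single sequence $h_1,h_2,\dots$ with $\deg(h_n)\to\infty$ and $||h_n||_4/||h_n||_2\to\sqrt[4]{c}$.

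The main obstacle is the second paragraph: pinning down the exact function $G(\tau,\gamma)$ while rigorously controlling the error. The difficulty is that the autocorrelations $C(k)$ have average size $\Theta(\sqrt{p})$, so one is summing $\Theta(p)$ quantities each of order $p$; crude absolute-value bounds on the incomplete character sums are far too lossy to isolate the leading coefficient, and the precise cancellation in $\sum_k C(k)^2$ must be extracted from the arithmetic of the Legendre symbol --- the evaluation of the relevant complete and incomplete quadratic character sums --- rather than from size estimates alone.
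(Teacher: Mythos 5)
Your framework (rotated, periodically extended Legendre-symbol polynomials with zeros replaced by $+1$, the identity $||h||_4^4=\sum_k C(k)^2$, and a concluding two-parameter minimization) matches the paper's, and your emphasis on $\gamma>1$ is exactly right, since the optimal $T_0$ (the middle root of $4x^3-30x+27$, about $1.06$) exceeds $1$. But the heart of your argument, the second paragraph, contains a genuine gap --- indeed a false step. Splitting each $C(k)$ into a ``periodic main term'' plus an incomplete-character-sum error $E_k$ with $|E_k|=O(\sqrt{p}\log p)$ does \emph{not} show the errors contribute $o(p^2)$ to $\sum_k C(k)^2$: summing $\Theta(p)$ squares each of size $O(\sqrt{p}\log p)$ gives $O(p^2\log^2 p)$, which swamps, rather than respects, the $\Theta(p^2)$ main term. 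Worse, these ``errors'' are not errors at all: for $k\not\equiv 0\pmod{p}$ the complete sum of $\leg{j(j+k)}{p}$ over a period is $-1$, so your periodic main term for each such $k$ is $O(1)$, and your $G(\tau,\gamma)$ would reduce to the overlap contributions at $k\equiv 0\pmod p$, namely $\sum_{n\geq 1}\max(0,\gamma-n)^2$, with no dependence on the rotation $\tau$ whatsoever. That is demonstrably wrong: at $\gamma=1$ your main terms would give limiting ratio $1$, whereas the true value at $\tau=1/4$ is $7/6$, and the $\tau$-dependence (hence the entire optimization that produces $27x^3-498x^2+1164x-722$) lives precisely in the squares of the incomplete sums you discard. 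Your closing paragraph shows you sense this, but naming the obstacle is not the same as overcoming it: as written, the proposal asserts the explicit piecewise function $G$ and the cubic for $c$ without any mechanism that could produce them.

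The missing mechanism is an asymptotic evaluation of the constrained quadruple sum $\sums{j_1+j_2=j_3+j_4}\leg{(j_1+r)(j_2+r)(j_3+r)(j_4+r)}{p}$ (equivalently, of $\sum_k E_k^2$). The paper achieves this by expanding each Legendre symbol via Gauss sums into additive characters of $\F_p$, converting the incomplete constrained sum into complete quartic sums $L(a,b,c)=\sum_{x\in\F_p}\leg{x(x-a)(x-b)(x-c)}{p}$. Weil's bound then isolates a main term equal to $p$ exactly when the quartic is a square in $\F_p[x]$ --- the three pairings of roots yield the three pieces of the limit function, including the $R$-dependent term $\sum_n\max(0,T-|T+2R-n|)^2$ --- while a technical lemma bounding the accompanying triple exponential sums by $O(\max(p,t)^3(1+\log p)^3)$ shows the Weil-error contributes only $O(p^{3/2}\log^3 p)=o(p^2)$. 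Some such completion device (the paper's Gauss-sum expansion, or the older Fourier interpolation of H{\o}holdt--Jensen) is indispensable; without it your plan cannot isolate the leading coefficient, and the remaining steps (the elimination producing the cubic, the verification $c<22/19$, the diagonal argument) have nothing to stand on.
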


To date, two principal methods have been used to calculate the $L^4$ norm of 
a sequence of polynomials~\cite{Hoholdt}. The first is direct calculation, 
in the case
that the polynomials are recursively defined~\cite{Littlewood-68}.
The second, introduced by H{\o}holdt and Jensen \cite{Hoholdt-Jensen}
and subsequently employed widely for its generality 
\cite{Jensen-Hoholdt}, \cite{Jensen-Jensen-Hoholdt}, \cite{Borwein-Choi-2000}, \cite{Borwein-Choi-2001}, \cite{Borwein}, \cite{Borwein-Choi-2002}, \cite{Schmidt-Jedwab-Parker}, \cite{Jedwab-Schmidt}, 
obtains $||f||_4$ from a Fourier interpolation of~$f$.
In this paper, we use a simpler method that also 
obtains the $L^4$ norm of truncations and periodic extensions of~$f$.
We apply this method to
Littlewood polynomials derived from the Fekete polynomials,
themselves the fascinating subject of many studies \cite{Fekete-Polya}, \cite{Polya}, \cite{Montgomery}, \cite{Baker-Montgomery}, \cite{Conrey-Granville-Poonen-Soundararajan}, \cite{Borwein-Choi-Yazdani}, \cite{Borwein-Choi-2002}.
The possibility that these derived polynomials could have
an asymptotic ratio $||f||_4/||f||_2$ smaller than $\sqrt[4]{7/6}$ 
was first recognized by 
Kirilusha and Narayanaswamy~\cite{Kirilusha-Narayanaswamy} in 1999.
Borwein, Choi, and Jedwab subsequently used extensive numerical data to 
conjecture conditions under which the value $\sqrt[4]c$ in 
Theorem~\ref{Introductory-Theorem} could be attained 
asymptotically (giving a corresponding
asymptotic merit factor $1/(c-1)>6.34$)~\cite{Borwein-Choi-Jedwab}, 
but until now no explanation has been given as to whether their
conjecture might be correct, nor if so why.

\section{The Asymptotic $L^4$ Norm of Generalized Fekete Polynomials}

Henceforth, let $p$ be an odd prime and let $r$ and $t$ be integers with
$t \ge 0$. 
The {\it Fekete polynomial} of degree $p-1$ is
$\sum_{j=0}^{p-1} \leg{j}{p} z^j$, where $\leg{\cdot}{p}$ 
is the Legendre symbol. 
We define the {\it generalized Fekete polynomial}
\[
f_p^{(r,t)}(z)= \sum_{j=0}^{t-1} \leg{j+r}{p} z^j. 
\]
The polynomial $f_p^{(r,t)}$ is formed from the Fekete polynomial
of degree $p-1$ by cyclically permuting the coefficients through $r$ positions, 
and then truncating when $t < p$ or periodically extending
when $t > p$. 
We wish to determine the asymptotic 
behavior of the $L^4$ norm of the generalized Fekete polynomials
for all $r,t$.

Since the Legendre symbol is a multiplicative character, we can use 
\[
||f||_4^4=\frac{1}{2\pi}\int_0^{2\pi} \big[f(e^{i\theta})\conj{f(e^{i\theta})}\,\big]^2 d\theta
\]
to obtain
\begin{equation}\label{fprt44}
||f_p^{(r,t)}||_4^4=\sums{0 \leq j_1,j_2,j_3,j_4 < t \\ j_1+j_2=j_3+j_4} \leg{(j_1+r)(j_2+r)(j_3+r)(j_4+r)}{p}.
\end{equation}
Until now, the asymptotic
evaluation of~\eqref{fprt44} has been considered intractable because,
when $t$ is not a multiple of~$p$, the expression~\eqref{fprt44} 
is an incomplete character sum whose indices are subject to an additional 
constraint. 
We shall overcome this apparent difficulty by using the Fourier expansion
of the multiplicative character $\leg{j}{p}$ in terms of additive characters
of~$\F_p$, with Gauss sums playing the part of Fourier coefficients. 
This expansion introduces complete character sums over~$\F_p$ which, 
once computed, allow an easy asymptotic evaluation of~\eqref{fprt44}.
This method is considerably simpler and more general than the Fourier
interpolation method of \cite{Hoholdt-Jensen}.

\begin{theorem}\label{Fekete-Calculation}
Let $r/p \to R < \infty$ and $t/p \to T < \infty$ as $p \to \infty$. Then
\[
\frac{||f_p^{(r,t)}||_4^4}{p^2} \to - \frac{4 T^3}{3} 
+ 2 \sum_{n \in \Z} \max(0,T-|n|)^2 
+ \sum_{n \in \Z} \max(0,T-\left|T+2R-n\right|)^2
\]
as $p \to \infty$.
\end{theorem}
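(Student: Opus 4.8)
The plan is to start from the exact character-sum expression~\eqref{fprt44} and to handle the incomplete sums that arise by \emph{completion}, using the finite Fourier expansion of the Legendre symbol in terms of additive characters. Writing $e_p(x)=e^{2\pi i x/p}$ and letting $g=\sum_{a\in\F_p}\leg{a}{p}e_p(a)$ be the quadratic Gauss sum, one has $\leg{m}{p}=g^{-1}\sum_{a}\leg{a}{p}e_p(am)$ for all $m$, together with $g^2=\leg{-1}{p}p$ and hence $g^4=p^2$; this last identity is what balances the normalisation by $p^2$ in the statement. The first step is purely combinatorial: grouping the solutions of $j_1+j_2=j_3+j_4$ in~\eqref{fprt44} by the common value $s=j_1+j_2$ gives
\[
\|f_p^{(r,t)}\|_4^4=\sum_{s=0}^{2t-2}d_s^2,\qquad d_s=\sum_{j\in I_s}\leg{(r+j)(r+s-j)}{p},
\]
where $I_s=\{j:0\le j<t,\ 0\le s-j<t\}$. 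The number of representations $s=j_1+j_2$ is the triangular weight $|I_s|=t-|t-1-s|$, which after normalisation satisfies $|I_s|/p\to\max(0,T-|T-\sigma|)$ as $s/p\to\sigma$; sampled along arithmetic progressions modulo $p$, this weight is the source of every $\max(0,T-|\cdot|)$ appearing in the theorem.

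Next I would write $x=r+j$ and $c=2r+s$, so that $d_s=\sum_{x}\leg{x(c-x)}{p}$ is an incomplete character sum over an interval of length $|I_s|$, and separate the contribution according to whether $c\equiv 0\pmod p$, i.e.\ $s\equiv -2r\pmod p$. On this resonance $\leg{x(c-x)}{p}=\leg{-x^2}{p}=\leg{-1}{p}$ for $x\not\equiv 0$, so $d_s=\leg{-1}{p}\bigl(|I_s|+O(1)\bigr)$ and $d_s^2=|I_s|^2+O(|I_s|)$. The resonant $s$ form the progression $s=np-2r$, along which $\sigma=n-2R$ and $|I_s|/p\to\max(0,T-|T+2R-n|)$; summing and dividing by $p^2$ therefore yields $p^{-2}\sum_{s\equiv -2r}d_s^2\to\sum_{n\in\Z}\max(0,T-|T+2R-n|)^2$, the $R$-dependent third term.

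For the non-resonant shifts $c\not\equiv 0$ I would expand $d_s^2=\sum_{x,x'}\leg{x(c-x)x'(c-x')}{p}$ and isolate the two diagonals on which the summand is forced to equal $1$: namely $x\equiv x'\pmod p$, and its image $x+x'\equiv c\pmod p$ under the interval-preserving involution $x'\mapsto c-x'$. Each diagonal is an exact representation count, and by the same triangular normalisation each contributes $\sum_{n}\max(0,T-|n|)^2$, independently of $r$; together they give the middle term $2\sum_{n}\max(0,T-|n|)^2$. What remains is the genuine off-diagonal, and this is where the main term $-\tfrac{4T^3}{3}$ must come from: completing the summation over $x,x'$ and using the Jacobi-type evaluation $\sum_x\leg{x(c-x)}{p}=-\leg{-1}{p}$ for $c\not\equiv 0$, one expects a negative off-diagonal mean of size $-2\int\max(0,T-|x|)^2\,dx\cdot p^2=-\tfrac{4T^3}{3}p^2$.

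The hard part is precisely this last step: controlling the off-diagonal, non-resonant contribution and pinning down its constant. After completion, the relevant complete sums are Kloosterman sums $\sum_{k\ne 0}e_p(\alpha k+\beta k^{-1})$, which the Weil bound controls only as $O(\sqrt p)$; inserting this bound termwise is too lossy, leaving a divergent $O(\log^2 p)$ after normalisation, so one must instead exploit cancellation on average across the progression of shifts to reduce the error to $o(p^2)$ while extracting the exact coefficient $-\tfrac{4}{3}$. Once the three contributions are secured, the final step is routine: as $p\to\infty$ with $t/p\to T$ and $r/p\to R$, the normalised triangular sums converge to the stated sums over $\Z$, and collecting the three pieces gives the claimed limit.
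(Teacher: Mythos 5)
Your combinatorial skeleton is correct and in fact mirrors the paper's term structure exactly: your resonant progression $s\equiv -2r\pmod p$ is the paper's term $C$, and your two diagonals $x\equiv x'\pmod p$ and $x+x'\equiv c\pmod p$ are the paper's terms $A$ and $B$, each correctly evaluated as $\sum_n\max(0,t-|n|p)^2$. But there is a genuine gap, and you name it yourself: the off-diagonal, non-resonant contribution carrying the main term $-\tfrac{4T^3}{3}$ is never proved. Your heuristic is also not quite the right mechanism: the evaluation $\sum_x\leg{x(c-x)}{p}=-\leg{-1}{p}$ makes the complete double sum over a full period equal to $1$, while the two diagonals there contribute $2p+O(1)$, so the off-diagonal mass per complete period is $-2p+O(1)$; what you actually need is that this mass is equidistributed enough that the box $I_s\times I_s$ captures a fraction $(|I_s|/p)^2$ of it, uniformly over the roughly $2t$ shifts $s$, with total error $o(p^2)$ --- summing $-2p(|I_s|/p)^2$ over $s$ is what reproduces $-\tfrac{2}{p}\sum_s|I_s|^2\to-\tfrac{4T^3}{3}p^2$. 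Nothing in your sketch proves this equidistribution. Per-$s$ completion with termwise Weil gives $d_s=O(\sqrt{p}\log p)$, hence $\sum_s d_s^2=O(p^2\log^2 p)$: the $\sqrt{p}$ saving is destroyed because you square $d_s$ before summing, and the ``cancellation on average across the progression of shifts'' you invoke is precisely the hard content of the theorem, not a routine step. (A side correction: the twisted complete sums arising here are Sali\'e-type sums $\sum_x\leg{x(c-x)}{p}e^{2\pi iux/p}$ rather than Kloosterman sums; their explicit evaluation plus a Parseval argument in $u$ could conceivably close your route, but you carry out none of it.)

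The paper sidesteps this difficulty by a different ordering of steps: it completes all four variables in \eqref{fprt44} simultaneously, \emph{before} any incomplete sum is squared, so that everything reduces to the complete quartic sums $L(a,b,c)=\sum_x\leg{x(x-a)(x-b)(x-c)}{p}$, which Weil bounds uniformly by $3\sqrt{p}$ except on the three planes where the quartic is a square in $\F_p[x]$. Those three planes are exactly your two diagonals and your resonance (the terms $A$, $B$, $C$), and the coefficient $-\tfrac{4T^3}{3}$ appears not as a delicate off-diagonal mean but as the trivial inclusion--exclusion correction $D$ for the triple counting of $(a,b,c)=(0,0,0)$. The entire error analysis is then the single estimate of Lemma~\ref{Technical-Lemma}, giving $|E|/p^2 \le 192\,p^{-7/2}\max(p,t)^3(1+\log p)^3 = O(p^{-1/2}\log^3 p)$: because the Weil bound is applied once, linearly, the factor $p^{-1/2}$ survives the logarithmic losses that sink your termwise approach. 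Until you supply the averaging argument you allude to, your proposal establishes the two positive secondary terms but not the theorem.
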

\begin{proof}
Let $\e_j=e^{2 \pi i j/p}$ for $j \in \Z$.
Gauss \cite{Gauss}, \cite{Berndt-Evans} showed that
\[
\sum_{k\in\F_p} \e_j^k \leg{k}{p} = i^{(p-1)^2/4} \sqrt{p} \, \leg{j}{p}.
\]
Substitution in \eqref{fprt44} gives
\[
||f_p^{(r,t)}||_4^4=\frac{1}{p^2} \sums{0 \leq j_1,j_2,j_3,j_4 < t \\ j_1+j_2=j_3+j_4} \enspace \sum_{k_1,k_2,k_3,k_4\in\F_p} \e_{j_1+r}^{k_1} \e_{j_2+r}^{k_2} \e_{j_3+r}^{k_3} \e_{j_4+r}^{k_4} \leg{k_1 k_2 k_3 k_4}{p}.
\]
Re-index with $k_1=x$, $k_2=x-a$, $k_3=b-x$, $k_4=c-x$ to obtain
\begin{equation}\label{The-Big-Formula}
||f_p^{(r,t)}||_4^4=\frac{1}{p^2} \sums{0 \leq j_1,j_2,j_3,j_4 < t \\ j_1+j_2=j_3+j_4} \enspace \sum_{a,b,c\,\in\F_p} \e_{a}^{-(j_2+r)} \e_b^{j_3+r} \e_c^{j_4+r} L(a,b,c),
\end{equation}
where
\[
L(a,b,c) = \sum_{x \in \F_p} \leg{x(x-a)(x-b)(x-c)}{p}.
\]
A Weil-type bound on character sums \cite{Weil}, \cite[Lemma 9.25]{Montgomery-Vaughan}, shows that $|L(a,b,c)| \leq  3 \sqrt{p}$ when $x(x-a)(x-b)(x-c)$ is not a square in $\F_p[x]$.
This polynomial is a square in $\F_p[x]$ if and only if it either has
two distinct double roots, in which case $L(a,b,c)=p-2$, or else has 
a quadruple root, in which case $L(a,b,c)=p-1$.
We shall see that contributions from $L(a,b,c)$ much smaller than $p$ will not influence the asymptotic value of the $L^4$ norm. 
Accordingly, we write $L(a,b,c)=M(a,b,c)+N(a,b,c)$, with a main term 
\[
M(a,b,c) = \begin{cases}
p & \text{if $x(x-a)(x-b)(x-c)$ is a square in $\F_p[x]$,} \\
0 & \text{if $x(x-a)(x-b)(x-c)$ is not a square in $\F_p[x]$,}
\end{cases}
\]
and an error term $N(a,b,c)$ satisfying
\begin{equation}\label{N-bound}
|N(a,b,c)| \leq 3 \sqrt{p}.
\end{equation}
There are three ways of pairing the roots of $x(x-a)(x-b)(x-c)$: (i) $a=c$ and $b=0$, (ii) $b=a$ and $c=0$, or (iii) $c=b$ and $a=0$.
So $M(a,b,c)=p$ if at least one of these conditions is met, and vanishes otherwise.
The only triple $(a,b,c)$ that satisfies more than one of these conditions is $(0,0,0)$.
We now reorganize \eqref{The-Big-Formula} by writing $L(a,b,c)$ as $M(a,b,c)+N(a,b,c)$, and then break the sum involving $M(a,b,c)$ into four parts: three sums corresponding to the three pairings, and a fourth sum to correct for the triple counting of $(a,b,c)=(0,0,0)$.  We keep the sum over $N(a,b,c)$ entire, and thus have
\begin{equation}\label{Five-Way-Sum}
||f_p^{(r,t)}||_4^4 = A+B+C+D+E,
\end{equation}
where 
\begin{eqnarray*}
A & = & \frac{1}{p} \sums{0 \leq j_1,j_2,j_3,j_4 < t \\ j_1+j_2=j_3+j_4} \enspace \sum_{a\in\F_p} \e_{a}^{j_4-j_2}, \\
B & = & \frac{1}{p} \sums{0 \leq j_1,j_2,j_3,j_4 < t \\ j_1+j_2=j_3+j_4} \enspace \sum_{b\in\F_p} \e_{b}^{j_3-j_2}, \\
C & = & \frac{1}{p} \sums{0 \leq j_1,j_2,j_3,j_4 < t \\ j_1+j_2=j_3+j_4} \enspace \sum_{c\in\F_p} \e_{c}^{j_3+j_4+2 r}, \\
D & = & -\frac{2}{p} \sums{0 \leq j_1,j_2,j_3,j_4 < t \\ j_1+j_2=j_3+j_4} 1, \\
E & = & \frac{1}{p^2} \sum_{a,b,c\,\in\F_p} N(a,b,c) \e_{-a+b+c}^r \sums{0 \leq j_1,j_2,j_3,j_4 < t \\ j_1+j_2=j_3+j_4} \e_{a}^{-j_2} \e_b^{j_3} \e_c^{j_4}.
\end{eqnarray*}
Note that $A=B$, and that $A$ counts the quadruples $(j_1,j_2,j_3,j_4)$ of integers in $[0,t)$ with $j_1+j_2=j_3+j_4$ and $j_4 \equiv j_2 \pmod{p}$, or equivalently, with $j_4-j_2=n p$ and $j_3-j_1=-n p$ for some $n \in \Z$.
For each $n\in\Z$ there are $\max(0,t-|n|p)$ ways to obtain
$j_4-j_2=n p$ and the same number of ways to obtain $j_3-j_1=-n p$. Therefore
$A=B=\sum_{n \in \Z} \max(0,t-|n| p)^2$.
This is a locally finite sum of continuous functions, and since
$t/p \to T$ as $p\to \infty$ we have
\[
\frac{A}{p^2} + \frac{B}{p^2} \to 2\sum_{n \in \Z} \max(0,T-|n|)^2.
\]

The summation in $D$ counts the quadruples $(j_1,j_2,j_3,j_4)$ of integers in $[0,t)$ with $j_1+j_2=j_3+j_4$.
For each $n\in\Z$ there are $\max(0,t-|t-1-n|)$
ways to represent $n$ as $j_1+j_2$ with $j_1, j_2 \in [0,t)$, and so
\[
D=-\frac{2}{p}\sum_{n \in \Z}\max(0,t-|t-1-n|)^2.
\] 
Thus $D = -\frac{2t(2t^2+1)}{3p}$, and
since $t/p \to T$ as $p \to \infty$ we get
$D/p^2 \to -4 T^3/3$.

Similarly, $C$ counts the quadruples $(j_1,j_2,j_3,j_4)$ of integers in $[0,t)$ with $j_1+j_2=j_3+j_4 = -2 r +np$ for some $n\in\Z$.
Replacing $n$ by $-2r+np$ in the above argument for $D$, we find that
\[
\frac{C}{p^2} = \sum_{n \in \Z} \max\left(0,\frac{t}{p}-\left|\frac{t-1+2r}{p}-n\right|\right)^2.
\]
This is a locally finite sum of continuous functions 
$\psi_n(x,y)=\max(0,x-|y-n|)^2$ evaluated at $x=t/p$ and $y=(t-1+2 r)/p$.
Since $r/p \to R$ and $t/p \to T$ as $p \to \infty$, it follows that
\[
\frac{C}{p^2} \to \sum_{n \in \Z} \max(0,T-|T+2 R-n|)^2.
\]

By \eqref{Five-Way-Sum}, it remains to show that $|E|/p^2 \to 0$ 
as $p \to \infty$. Use \eqref{N-bound} to bound $|N(a,b,c)|$, and then
use Lemma \ref{Technical-Lemma} below to bound the resulting outer sum over
$a,b,c$ to give $|E|/p^2 \le 192 p^{-7/2} \max(p,t)^3 (1+\log p)^3$.
Since $t/p \to T < \infty$ as $p \to \infty$, we then obtain $E/p^2 \to 0$
as required.
\end{proof}
We now prove the technical result invoked in
the proof of Theorem \ref{Fekete-Calculation}.
\begin{lemma}\label{Technical-Lemma}
Let $n$ be a positive integer and $\e_j=e^{2\pi i j/n}$ for $j \in \Z$. Then
\[
\sum_{a,b,c \, \in \Z/n\Z} \Bigg|\sums{0 \leq j_1,j_2,j_3,j_4 < t \\ j_1+j_2=j_3+j_4}  \e_a^{-j_2} \e_b^{j_3} \e_c^{j_4}\Bigg| \leq 64 \max(n,t)^3 (1+\log n)^3.
\]
\end{lemma}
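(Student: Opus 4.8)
The plan is to reduce the constrained four-fold sum to products of one-dimensional geometric (Dirichlet-type) sums, summed one frequency variable at a time, so that each of the three summations $\sum_a$, $\sum_b$, $\sum_c$ contributes exactly one logarithmic factor. The engine is an elementary one-dimensional estimate: writing $g_t(\phi)=\sum_{j=0}^{t-1}e^{ij\phi}$, one has $|g_t(\phi)|\le\min(t,\pi/\delta)$, where $\delta$ is the distance from $\phi$ to the nearest multiple of $2\pi$. Summing the values at the $n$ equally spaced points $2\pi a/n$ and estimating the resulting harmonic sum gives a bound of the shape $\sum_{a\in\Z/n\Z}|g_t(2\pi a/n)|\le 4\max(n,t)(1+\log n)$, and the companion prefactor sum $\sum_{a\ne 0}1/|1-\e_a|$ obeys the same type of bound. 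Cubing such a one-dimensional estimate produces exactly the target form $\max(n,t)^3(1+\log n)^3$, with the constant $64=4^3$.

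To expose this product structure I would first sum out the index $j_1$. For fixed $j_3,j_4$ with $s=j_3+j_4$, the inner sum over pairs $(j_1,j_2)$ with $j_1+j_2=s$ and $j_1,j_2\in[0,t)$ is the partial geometric sum $P_a(s)=\sum_{j_2}\e_a^{-j_2}$ over $\max(0,s-t+1)\le j_2\le\min(t-1,s)$, so the whole expression becomes $\sum_{j_3,j_4\in[0,t)}\e_b^{j_3}\e_c^{j_4}\,P_a(j_3+j_4)$. For $a\neq 0$ the closed form $P_a(s)=(\e_a^{-L(s)}-\e_a^{-U(s)-1})/(1-\e_a^{-1})$, with $L,U$ the interval endpoints, is piecewise a pure exponential in $s=j_3+j_4$ once the range of $s$ is split at $t-1$; since $\e_a^{-(j_3+j_4)}=\e_a^{-j_3}\e_a^{-j_4}$ factors, on each piece the summand becomes a genuine product of a function of $j_3$ and a function of $j_4$ over a triangular region. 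Carrying out the $j_4$ then $j_3$ summations (summation by parts, absorbing the boundary $j_3+j_4\gtrless t-1$ into endpoint terms) expresses $S(a,b,c)$ as a short combination of terms, each a prefactor $1/|1-\e_a^{-1}|$ or $1/|1-\e_c|$ times a product of one or two kernels $g_t(2\pi(\cdot)/n)$. One then sums $|S(a,b,c)|$ over $c$, then $b$, then $a$, applying the one-dimensional estimate at each stage: the three stages supply the three logarithmic factors and the factor $\max(n,t)^3$, and tracking the accumulated constants yields the stated bound.

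The main obstacle is to avoid losing an extra logarithm. The crude route---bounding $|S(a,b,c)|$ by taking absolute values inside the integral representation
\[
S(a,b,c)=\frac{1}{2\pi}\int_0^{2\pi}g_t(\theta)\,g_t\bigl(\theta-\tfrac{2\pi a}{n}\bigr)\,\overline{g_t\bigl(\theta-\tfrac{2\pi b}{n}\bigr)\,g_t\bigl(\theta-\tfrac{2\pi c}{n}\bigr)}\,d\theta
\]
and then summing over $a,b,c$---discards the cancellation in the geometric sums and overshoots by one power of $1+\log n$. This is already visible when $t=n$: there the true sum is essentially free of logarithms (the one-dimensional sums become complete and collapse), whereas the absolute-value bound is not. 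The summation-by-parts reduction above is designed precisely to preserve this cancellation, keeping the constraint $j_1+j_2=j_3+j_4$ encoded in explicit boundary kernels rather than inflating the estimate.

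The remaining effort is bookkeeping. I would treat the degenerate frequencies $a=0$ and $c=0$ separately, where the geometric closed forms are replaced by term counts and Fejér-type (piecewise-linear-weight) sums; for instance the full diagonal term $S(0,0,0)$ is the single largest contribution, of size $t(2t^2+1)/3$, which already lies comfortably within $64\max(n,t)^3(1+\log n)^3$. One must also verify the split at $s=t-1$ and confirm that the constants emerging from the three one-dimensional summations, together with these exceptional terms, stay within the stated bound.
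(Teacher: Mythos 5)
Your proposal takes essentially the same route as the paper's own proof: you fix the value $s=j_3+j_4$ of the constraint and split at $s=t-1$ (exactly the paper's split of $G$ into $H$ and $J$), reduce to closed-form geometric sums with shifted frequencies so that the cancellation in the incomplete sums is preserved, and then sum moduli over the three frequency variables one at a time via the harmonic-sum bound $\sum_{j\neq 0} 1/|1-\e_j| \le n\log n$, handling vanishing frequencies by polynomial-weight counts --- which is precisely the paper's partition $H=H_1+2H_2+H_3$ together with its inductive bound on $\sum_{j\neq 0}\big|\sum_h (h+1)^d \e_j^h\big|$. The differences (your explicit kernel $P_a(j_3+j_4)$ and sequential summation over $c$, $b$, $a$ versus the paper's re-indexing $k=-a$, $\ell=c-b$, $m=b$, and your Fej\'er-type treatment of degenerate cases versus the induction on $d$) are organizational rather than substantive, so the argument is sound and matches the published proof.
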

\begin{proof}
Let $G$ be the entire sum. Re-index with $k=-a$, $\ell=c-b$, $m=b$, to obtain
\[
G=\sum_{k,\ell,m \in \Z/n\Z} \Bigg|\sums{0 \leq j_1,j_2,j_3,j_4 < t \\ j_1+j_2=j_3+j_4}  \e_k^{j_2} \e_\ell^{j_4} \e_m^{j_3+j_4}\Bigg|.
\]
Re-index the inner sum with $h=j_3+j_4$, separating into ranges $h \le t-1$ and $h \ge t$, so that $G \leq H+J$, where
\begin{align*}
H &= \sum_{k,\ell,m \in \Z/n\Z} \Bigg|\sum_{h=0}^{t-1} \e_m^h \sum_{j_2,j_4=0}^{h} \e_k^{j_2}  \e_\ell^{j_4}\Bigg|, \\
J &= \sum_{k,\ell,m \in \Z/n\Z} \Bigg|\sum_{h=t}^{2t-2} \e_m^h \sum_{j_2,j_4 = h-(t-1)}^{t-1} \e_k^{j_2} \e_\ell^{j_4} \Bigg|.
\end{align*}
We shall show that $H \leq 32 \max(n,t)^3 (1+\log n)^3$, from which we can deduce the same bound on $J$ after re-indexing with $h'=2(t-1)-h$, $j'_2=j_2+h'-(t-1)$, $j'_4=j_4+h'-(t-1)$, and $m^\prime=-(k+\ell+m)$.

Partition the sum $H$ into a sum with $k,\ell\not=0$, two sums where one of $k,\ell$ is zero and the other is nonzero, and a sum where $k=\ell=0$; then sum over the indices $j_2$ and $j_4$ to obtain $H=H_1+2H_2+H_3$, where
\begin{eqnarray}
H_1 & = & \sums{k,\ell,m \in \Z/n\Z \\ k,\ell\not=0} \left|\sum_{h=0}^{t-1} \frac{\e_m^h - \e_k \e_{m+k}^h -\e_\ell \e_{m+\ell}^h + \e_{k+\ell} \e_{m+k+\ell}^h}{(1-\e_k)(1-\e_\ell)} \right|,\nonumber\\
H_2 & = & \sums{k,m \in \Z/n\Z \\ k\not=0} \left|\sum_{h=0}^{t-1} \frac{(h+1) \left(\e_m^h -\e_k \e_{m+k}^h\right)}{1-\e_k}\right|,\label{H-parts} \\
H_3 & = & \sum_{m \in \Z/n\Z} \bigg|\sum_{h=0}^{t-1} (h+1)^2 \e_m^h \bigg|,\nonumber
\end{eqnarray}
To bound these sums, we prove by induction on $d \ge 0$ that for $s \ge 0$,
\begin{equation}\label{incomplete-inductive-bound}
\sums{j \in \Z/n\Z \\ j\not=0} \bigg|\sum_{h=0}^{s-1} (h+1)^d \e_j^h\bigg| \leq 2^{d+1} s^d n \log n.
\end{equation}
The base case follows from $\big|\sum_{h=0}^{s-1} \e_j^h\big| \leq 2/|1-\e_j|$ and the bound \cite[p.~136]{Davenport}
\begin{equation}\label{nlogn-Identity}
\sums{j=1}^{n-1} \frac{1}{|1-\e_j|} \leq n \log n.
\end{equation}
For the inductive step, apply the triangle inequality to the identity
$$\sum_{h=0}^{s-1} (h+1)^d \e_j^h = \sum_{g=0}^{s-1} \sum_{h=0}^{s-1} (h+1)^{d-1} \e_j^h - \sum_{g=0}^{s-1} \sum_{h=0}^{g-1} (h+1)^{d-1} \e_j^h,$$
to obtain a bound for the left hand side as the sum of the magnitudes of $2 s$ summations over $h$ involving $(h+1)^{d-1}\e_j^h$, then sum over
$j \ne 0$ and use the inductive hypothesis.

Now from \eqref{incomplete-inductive-bound} we find
\begin{equation}\label{inductive-bound}
\sums{j \in \Z/n\Z} \bigg|\sum_{h=0}^{t-1} (h+1)^d \e_j^h\bigg| \leq t^{d+1} + 2^{d+1} t^d  n \log n,
\end{equation}
since $\sum_{h=0}^{t-1} (h+1)^d \leq t^{d+1}$.
Apply the triangle inequality, \eqref{nlogn-Identity}, and \eqref{inductive-bound} to the expressions \eqref{H-parts} to obtain $H_1 \leq 4 (t+2 n\log n)(n \log n)^2$, $H_2 \leq 2 (t^2 + 4 t n \log n) n \log n$, and $H_3 \leq t^3 + 8 t^2 n \log n$. Therefore $H=H_1+2 H_2 + H_3 \leq 32 \max(n,t)^3 (1+\log n)^3$, as required.
\end{proof}

\section{Littlewood Polynomials with Small $L^4$ Norm}
The generalized Fekete polynomial $f_p^{(r,t)}$ is not
necessarily a Littlewood polynomial, because its coefficient of $z^j$ is $0$ 
for $0 \le j < t$ and $j+r \equiv 0 \pmod{p}$. 
Replace each such zero coefficient of $f_p^{(r,t)}$ with $1$
to define a family of Littlewood polynomials
\begin{equation}\label{g-Defn}
g_p^{(r,t)}(z) = f_p^{(r,t)}(z) \h{0.5em} + \h{-1.3em} 
\sums{0 \le j < t \\  j+r \equiv 0 \!\!\!\!\!\pmod{p}} \h{-1.3em} z^j.
\end{equation}
We now show that the asymptotic $L^4$ norm of 
$g_p^{(r,t)}$ as $p \to \infty$ behaves in the same way as that 
of $f_p^{(r,t)}$.

\begin{corollary}\label{Littlewood-Corollary}
Let $r/p \to R < \infty$ and $t/p \to T < \infty$ as $p \to \infty$. Then
\[
\frac{||g_p^{(r,t)}||_4^4}{p^2} \to - \frac{4 T^3}{3} 
+ 2 \sum_{n \in \Z} \max(0,T-|n|)^2 
+ \sum_{n \in \Z} \max(0,T-\left|T+2R-n\right|)^2.
\]
as $p \to \infty$.
\end{corollary}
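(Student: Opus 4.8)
The plan is to show that $g_p^{(r,t)}$ and $f_p^{(r,t)}$ differ by a polynomial with boundedly many terms, so that their $L^4$ norms are asymptotically indistinguishable after division by $p^2$, whereupon the result follows directly from Theorem~\ref{Fekete-Calculation}.

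First I would set $\delta_p = g_p^{(r,t)} - f_p^{(r,t)}$, which by \eqref{g-Defn} is a polynomial all of whose coefficients lie in $\{0,1\}$, with a $1$ precisely at each exponent $j \in [0,t)$ satisfying $j+r \equiv 0 \pmod{p}$. The number of such exponents is the number of $j \in [0,t)$ lying in a single residue class modulo $p$, which is at most $\ceil{t/p}$; since $t/p \to T < \infty$, this count is bounded as $p \to \infty$. Consequently $|\delta_p(e^{i\theta})|$ never exceeds this count, so $||\delta_p||_4 \le ||\delta_p||_\infty \le \ceil{t/p} = O(1)$.

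Next I would invoke the triangle inequality for the $L^4$ norm in the reverse form $\big|\,||g_p^{(r,t)}||_4 - ||f_p^{(r,t)}||_4\,\big| \le ||\delta_p||_4$, which gives $||g_p^{(r,t)}||_4 = ||f_p^{(r,t)}||_4 + O(1)$. Writing $L$ for the nonnegative limit of $||f_p^{(r,t)}||_4^4/p^2$ supplied by Theorem~\ref{Fekete-Calculation}, I would divide through by $p^{1/2}$: since $||f_p^{(r,t)}||_4/p^{1/2} = \big(||f_p^{(r,t)}||_4^4/p^2\big)^{1/4} \to L^{1/4}$, the bounded additive perturbation becomes negligible, so $||g_p^{(r,t)}||_4/p^{1/2} = ||f_p^{(r,t)}||_4/p^{1/2} + O(p^{-1/2}) \to L^{1/4}$ as well. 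Raising to the fourth power, by continuity of $x \mapsto x^4$, then yields $||g_p^{(r,t)}||_4^4/p^2 \to L$, which is exactly the claimed limit.

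The one delicate point I expect to be the main (indeed essentially the only) obstacle is that one cannot argue via the \emph{relative} error $||\delta_p||_4/||f_p^{(r,t)}||_4 \to 0$, because the limit $L$ may equal $0$ (for instance when $T = 0$), in which case $||f_p^{(r,t)}||_4$ need not grow and the ratio is ill-behaved. Normalizing instead by the fixed scale $p^{1/2}$ circumvents this, since it converts the bounded additive term $||\delta_p||_4$ into a genuinely vanishing contribution $O(p^{-1/2})$ irrespective of the value of $L$; the remaining steps are then routine.
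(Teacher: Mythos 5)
Your proposal is correct and follows essentially the same route as the paper: both decompose $g_p^{(r,t)} = f_p^{(r,t)} + \delta$ where $\|\delta\|_4 \le \ceil{t/p}$, apply the triangle inequality for the $L^4$ norm, and conclude via the finite limit of $\|f_p^{(r,t)}\|_4/\sqrt{p}$ from Theorem~\ref{Fekete-Calculation} together with $\ceil{t/p}/\sqrt{p} \to 0$. The only cosmetic difference is that you normalize $\|\cdot\|_4$ by $p^{1/2}$ and invoke continuity of $x \mapsto x^4$, whereas the paper expands $\big|\,\|g\|_4^4 - \|f\|_4^4\,\big|$ binomially; your explicit handling of the possible degenerate case $L=0$ (when $T=0$) is a sound touch that the paper's equivalent estimate also covers implicitly.
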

\begin{proof}
Write $f=f_p^{(r,t)}$ and $g=g_p^{(r,t)}$ and $v = \ceil{t/p}$.
Since the $L^4$ norm of each $z^j$ in~\eqref{g-Defn} is~$1$, 
the triangle inequality for the $L^4$ norm gives
\[
\frac{1}{p^2}\Big|||g||_4^4 - ||f||_4^4\Big| \leq 
\frac{1}{p^2}\left(4 v ||f||_4^3+6 v^2 ||f||_4^2+4 v^3 ||f||_4+v^4\right).
\]
The limit as $p \to \infty$ of the right hand side is 0, because
$||f||_4/\sqrt{p}$ has a finite limit by Theorem~\ref{Fekete-Calculation}
and because $v/\sqrt{p} \to 0$ follows from $t/p \to T < \infty$.
\end{proof}
The specialization of Corollary~\ref{Littlewood-Corollary} to $T=1$ and $|R| \leq 1/2$ recovers the result due to H{\o}holdt and Jensen \cite{Hoholdt-Jensen} that the asymptotic ratio $||g_p^{(r,p)}||_4/||g_p^{(r,p)}||_2$ is $\sqrt[4]{7/6+8(|R|-1/4)^2}$ (which achieves its minimum value of $\sqrt[4]{7/6}$ at $R=\pm 1/4$).
The specialization of Corollary~\ref{Littlewood-Corollary} to $T \in (0,1]$ proves the conjecture of Borwein, Choi, and Jedwab \cite[Conjecture~7.5]{Borwein-Choi-Jedwab} mentioned at the end of the Introduction.
The authors of~\cite{Borwein-Choi-Jedwab} gave a proof that, subject to the
truth of their conjecture, Theorem \ref{Introductory-Theorem} holds.
In fact, Theorem~\ref{Introductory-Theorem} follows from
Corollary \ref{Littlewood-Corollary} directly.
We now show this, and demonstrate that the 
asymptotic ratio $||g_p^{(r,t)}||_4/||g_p^{(r,t)}||_2$ 
for $T>0$ and arbitrary $R$ cannot be made less than the value $\sqrt[4]{c}$
given in Theorem~\ref{Introductory-Theorem}.

\begin{corollary}\label{Global-Minimum-Corollary}
If $r/p \to R < \infty$ and $t/p \to T \in (0, \infty)$ as $p \to \infty$
then
$$\lim_{p \to \infty} \frac{||g_p^{(r,t)}||_4}{||g_p^{(r,t)}||_2} \geq \sqrt[4]{c},$$
where $c<22/19$ is the smallest root of $27x^3-498x^2+1164x-722$, with equality if and only if $T$ is the middle root $T_0$ of $4x^3-30x+27$ and $R=\frac14(3-2 T_0)+\frac{n}{2}$ for some integer~$n$.  
If $t/p \to \infty$ as $p \to \infty$, then $||g_p^{(r,t)}||_4/||g_p^{(r,t)}||_2 \to \infty$ as $p \to \infty$.
\end{corollary}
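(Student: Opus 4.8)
The plan is to first reduce both assertions to an analysis of the limiting function supplied by Corollary~\ref{Littlewood-Corollary}. Since $g_p^{(r,t)}$ is a Littlewood polynomial with exactly $t$ coefficients, $||g_p^{(r,t)}||_2^2=t$ and hence $||g_p^{(r,t)}||_2^4=t^2$. Writing $S_1(T)=\sum_{n\in\Z}\max(0,T-|n|)^2$ and $S_2(R,T)=\sum_{n\in\Z}\max(0,T-|T+2R-n|)^2$, Corollary~\ref{Littlewood-Corollary} together with $t^2/p^2\to T^2$ shows that for $T\in(0,\infty)$ the fourth power of the limiting ratio exists and equals
\[
F(R,T)=\frac{1}{T^2}\Big(-\tfrac{4}{3}T^3+2S_1(T)+S_2(R,T)\Big).
\]
Because $x\mapsto x^{1/4}$ is increasing, the first assertion is equivalent to $F(R,T)\ge c$ for all $R$ and all $T\in(0,\infty)$, with equality exactly in the stated cases.

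First I would minimize over $R$. Observe that $S_2(R,T)=P_T(T+2R)$, where $P_T(x)=\sum_{n\in\Z}\max(0,T-|x-n|)^2$ is the $1$-periodic periodization of the tent-squared function $\phi_T(y)=\max(0,T-|y|)^2$. As $R$ ranges over $\mathbb{R}$, the argument $T+2R$ traverses a full period, so $\min_R S_2(R,T)=\min_x P_T(x)$. For $T\le\tfrac32$ only finitely many translates are active, and a direct computation shows that $P_T$ is nonincreasing on $[0,\tfrac12]$; hence its minimum occurs at the half-integers, with $\min_x P_T(x)=0$ for $T\le\tfrac12$ and $\min_x P_T(x)=2(T-\tfrac12)^2$ for $T\in[\tfrac12,\tfrac32]$. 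Setting $F_{\min}(T)=\min_R F(R,T)$, it remains to minimize $F_{\min}$ over $T$ and to identify the minimizing phase.

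Next I would compute $F_{\min}$ piecewise, using $S_1(T)=T^2$ for $T\in(0,1]$ and $S_1(T)=3T^2-4T+2$ for $T\in[1,2]$. On $(0,\tfrac12]$ one gets $F_{\min}(T)=2-\tfrac{4}{3}T\ge\tfrac43$; on $[\tfrac12,1]$ a short calculation gives a decreasing function with $F_{\min}(1)=\tfrac76$; and on $[1,\tfrac32]$ one obtains
\[
F_{\min}(T)=-\tfrac43 T+8-\tfrac{10}{T}+\tfrac{9}{2T^2},
\]
whose stationary points satisfy $4T^3-30T+27=0$. The middle root $T_0\in(1,\tfrac32)$ is a local minimum, and after treating $T\ge\tfrac32$ a global one: there a crude lower bound on $P_T$ (every real lies within $\tfrac12$ of a lattice point, so several translates are active) shows $F_{\min}(T)>c$, and the leading behaviour $F_{\min}(T)\sim\tfrac23 T$ forces growth to infinity. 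Setting $c=F_{\min}(T_0)$ and eliminating $T_0$ via $4T_0^3=30T_0-27$ yields $27c^3-498c^2+1164c-722=0$; a sign check of this cubic at $22/19$ confirms $c<22/19$ and that $c$ is its least root. For the equality case, $F(R,T)=c$ forces $T=T_0$ and $S_2(R,T_0)=\min_x P_{T_0}(x)=P_{T_0}(\tfrac12)$, and since $P_{T_0}$ is strictly decreasing on $[0,\tfrac12]$ this requires $T_0+2R\equiv\tfrac12\pmod1$, equivalently $R=\tfrac14(3-2T_0)+\tfrac n2$ for some $n\in\Z$.

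Finally, the case $t/p\to\infty$ is the step I expect to be the main obstacle, since Corollary~\ref{Littlewood-Corollary} applies only for finite $T$. Here I would return to the exact decomposition $||f_p^{(r,t)}||_4^4=A+B+C+D+E$ from the proof of Theorem~\ref{Fekete-Calculation}. Using the exact evaluations of $A$, $B$, $D$ and the nonnegativity $C\ge p^2\min_x P_{t/p}(x)$, one gets $A+B+C+D\ge t^2 F_{\min}(t/p)-O(t/p)$, while $F_{\min}(T)\ge\tfrac23 T-O(1)$. The Weil error obeys $|E|/t^2\le 192\,(t/p)(1+\log p)^3/\sqrt p$, which is linear in $t/p$ exactly like the main term, so for $p$ large it is dominated and $||f_p^{(r,t)}||_4^4/t^2\to\infty$. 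The delicate point is to transfer this to $g_p^{(r,t)}$ uniformly in the growth rate of $t/p$: the triangle-inequality bound used in Corollary~\ref{Littlewood-Corollary} is too lossy when $t$ is very large compared with $p$, so instead I would estimate the correction $||g_p^{(r,t)}||_4^4-||f_p^{(r,t)}||_4^4$ directly, noting that the extra $\lceil t/p\rceil$ monomials contribute only further character sums of the same (linear in $t/p$) order. Dividing by $||g_p^{(r,t)}||_2^4=t^2$ then gives $||g_p^{(r,t)}||_4/||g_p^{(r,t)}||_2\to\infty$.
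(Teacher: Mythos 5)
Your finite-$T$ analysis is correct and takes a genuinely different route from the paper. The paper keeps both variables, covers $[0,\tfrac12]\times[\tfrac12,\tfrac32]$ by six compact regions $D_1,\dots,D_6$ on which $u(R,T)$ is rational, and minimizes region by region; you instead minimize over $R$ first, observing that $S_2(R,T)=P_T(T+2R)$ is a periodization of the tent-squared function whose minimum over a period sits at the half-integers, and then minimize the resulting one-variable function piecewise in $T$. Your computations check out: $F_{\min}(T)=-\tfrac43T+8-\tfrac{10}{T}+\tfrac{9}{2T^2}$ on $[1,\tfrac32]$ is exactly the paper's $u_4\bigl(\tfrac14(3-2T),T\bigr)$, the stationarity condition is $4T^3-30T+27=0$, and your strict-monotonicity argument for $P_{T_0}$ on $[0,\tfrac12]$ correctly pins down the equality phase $R\equiv\tfrac14(3-2T_0)\pmod{\tfrac12}$. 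This $R$-first organization is arguably cleaner than the six-region cover for the bounded range.

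However, there are two genuine gaps, both traceable to your missing the paper's Lemma~\ref{L4-bound}: since the coefficients of $g_p^{(r,t)}$ are periodic modulo $p$, a purely combinatorial count gives $||g_p^{(r,t)}||_4^4\ge\sum_{n\in\Z}\max(0,t-|n|p)^2$, which the paper uses to dispose of \emph{both} troublesome regimes at once — it yields $||g||_4^4/||g||_2^4\ge 1+2(1-p/t)^2>11/9>7/6$ whenever $t/p>3/2$ (reducing to $T\le\tfrac32$), and it forces the ratio to infinity when $t/p\to\infty$, with no character sums and no $f$-to-$g$ transfer at all. First, your treatment of $T\ge\tfrac32$ is under-justified: the truly crude bound (replacing $\min_xP_T(x)$ and $S_1(T)$ by integral approximations) gives $F_{\min}(\tfrac32)\approx 0.07$, far below $c\approx 1.158$, so you need the sharper fact $\min_x P_T(x)=2\sum_{k\ge0}\max(0,T-k-\tfrac12)^2$ (valid for all $T$ by convexity of $y\mapsto\max(0,T-y)^2$ on $[0,\infty)$), which you assert only for $T\le\tfrac32$. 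Second, and more seriously, your $t/p\to\infty$ argument leaves the $f$-to-$g$ transfer open, as you yourself concede; your claim that the correction consists of ``further character sums of the same (linear in $t/p$) order'' is not right as stated — with $h=g-f$ supported on $v=\lceil t/p\rceil$ monomials in an arithmetic progression, $||h||_4^4=\sum_n\max(0,v-|n|)^2$ is of order $v^3$, cubic in $t/p$. The transfer \emph{can} be repaired without new character sums, since $||h||_4\le v^{3/4}$ together with H\"older gives $||h||_4/||f||_4\to0$ whatever the growth of $t$, but you have not carried this out, whereas the paper's two-line Lemma~\ref{L4-bound} makes the entire detour through the $A+B+C+D+E$ decomposition for unbounded $t/p$ unnecessary.
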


\begin{proof}
Recall that $||f||_2^2=t$ for a Littlewood polynomial~$f$ of degree~$t-1$.
In the case $t/p \to \infty$, the required result is an easy consequence of Lemma~\ref{L4-bound} below.
This leaves the case where $r/p \to R < \infty$ and 
$t/p \to T \in (0,\infty)$ as $p \to \infty$.
We have already noted that when $R=1/4$ and $T=1$, the asymptotic ratio 
$||g_p^{(r,t)}||^4_4/||g_p^{(r,t)}||^4_2$ is~$7/6$.
If $t/p > 3/2$, we know from Lemma~\ref{L4-bound} that $||g_p^{(r,t)}||^4_4/||g_p^{(r,t)}||^4_2 \geq 1 + 2 (1-p/t)^2 > 11/9 > 7/6$, and so we may assume $T \leq 3/2$.

By Corollary \ref{Littlewood-Corollary}, $\lim_{p\to\infty}||g_p^{(r,t)}||_4^4/||g_p^{(r,t)}||_2^4$ is
\[
\frac{1}{T^2} \left[-\frac{4 T^3}{3} + 2 \sum_{n \in \Z} \max(0,T-|n|)^2 + \sum_{n \in \Z} \max(0,T-\left|T+2R-n\right|)^2\right].
\]
Call this function $u(R,T)$ and note that it is always at least $-\frac{4 T}{3} +2$, so that $u(R,T) > 4/3$ if $T < 1/2$.
By combination with the previous bound on~$T$, we may assume $T \in [1/2,\,3/2]$.
Furthermore, $u(R,T)$ does not change when $R$ is replaced by $R+1/2$, so it is sufficient to consider points $(R,T)$ in the set $D=[0,\,1/2] \times [1/2,\,3/2]$.
We cover $D$ with six compact sets:
\begin{eqnarray*}
D_1 & = & \{(R,T) \in D: T+2 R \leq 1\}, \\
D_2 & = & \{(R,T) \in D: 1 \leq T + 2 R, \h{0.3em} T + R \leq 1\}, \\
D_3 & = & \{(R,T) \in D: 1 \leq T + R, \h{0.3em} T \leq 1 \}, \\
D_4 & = & \{(R,T) \in D: 1 \leq T, \h{0.3em}  T + R \leq 3/2\}, \\
D_5 & = & \{(R,T) \in D: 3/2 \leq T + R,  \h{0.3em} T + 2 R \leq 2\}, \\
D_6 & = & \{(R,T) \in D: 2 \leq T + 2 R \}.
\end{eqnarray*}
These sets are chosen so that the restriction of $u(R,T)$ to $D_k$ is a 
continuous rational function $u_k(R,T)$, and so $u(R,T)$ attains a 
minimum value on each~$D_k$. For example,
\[
u_4(R,T) = -\frac{4 T}{3} + 2 + 4 \frac{(T-1)^2}{T^2} + \frac{(1-2 R)^2}{T^2} + \frac{(2 T + 2 R -2)^2}{T^2}.
\]
For each $T$, the function $u_4(R,T)$ is minimized when $R=(3-2 T)/4$,
and $u_4(\tfrac14(3-2T),T)=\frac{1}{6T^2}(-8 T^3+48 T^2-60 T+27)$ is 
minimized on~$D_4$ when $T$ is the 
middle root~$T_0$ of $4 x^3-30 x+27$.
Let $R_0=(3-2 T_0)/4$. 
The point $(R_0,T_0)$ lies in the interior of $D_4$.
One can show that $u_4(R_0,T_0)$ is the smallest root $c$ of $27x^3-498x^2+1164x-722$, and that $c < 22/19$.

Following the same method, the minimum value of $u_3(R,T)$ on $D_3$ is~$7/6$, 
attained at $(1/4,1)$.
Partial differentiation with respect to~$R$ shows that the minimum 
of $u_2(R,T)$ on $D_2$ lies on the boundary with~$D_3$, and that the 
minimum of $u_5(R,T)$ on $D_5$ lies on the boundary with~$D_4$.
The involution $(R,T) \mapsto (1-R-T,T)$ maps $D_1$ onto $D_2$ while preserving the value of $u(R,T)$; 
likewise with $(R,T) \mapsto (2-R-T,T)$ for $D_6$ and $D_5$.
Therefore the unique global minimum of $u(R,T)$ on $D$ is $c$,
attained at $(R_0,T_0)$.
\end{proof}
We close by proving the bound on $||f||_4^4$ used in the proof of 
Corollary~\ref{Global-Minimum-Corollary}.
\begin{lemma}\label{L4-bound}
Let $m$ be a positive integer, and let $f(z)=\sum_{j=0}^{t-1} f_j z^j$ be a Littlewood polynomial for which $f_j=f_k$ whenever $j\equiv k \pmod{m}$.  Then 
\[
||f||_4^4 \geq \sum_{n \in \Z} \max(0,t-|n|m)^2.
\]
\end{lemma}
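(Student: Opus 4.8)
The plan is to expand $||f||_4^4$ as a sum of squares of the aperiodic autocorrelations of the coefficient sequence, and then to keep only those autocorrelations whose shift is a multiple of $m$, since the periodicity hypothesis forces each of these to take its maximal value. This is essentially the communications-theoretic interpretation of the $L^4$ norm mentioned in the Introduction, but applied as a lower bound.

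First I would record that on the unit circle $|f(e^{i\theta})|^2 = \sum_{n} c_n e^{in\theta}$, where the aperiodic autocorrelation at shift $n$ is $c_n = \sum_j f_j f_{j-n}$, the sum extending over all integers $j$ with $0 \le j < t$ and $0 \le j-n < t$. Because the coefficients $f_j$ are real, each $c_n$ is real with $c_{-n}=c_n$, and Parseval's identity applied to the trigonometric polynomial $|f(e^{i\theta})|^2$ gives
\[
||f||_4^4 = \frac{1}{2\pi}\int_0^{2\pi} \big(|f(e^{i\theta})|^2\big)^2\,d\theta = \sum_{n} c_n^2.
\]

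The key step is the evaluation of $c_n$ when $n$ is a multiple of $m$. If $n=\ell m$, then $j \equiv j-n \pmod{m}$ for every index $j$, so the hypothesis $f_j=f_k$ whenever $j\equiv k \pmod m$ yields $f_j=f_{j-n}$ and hence $f_j f_{j-n}=f_j^2=1$. Therefore $c_{\ell m}$ simply counts the integers $j$ with $0 \le j < t$ and $0 \le j-\ell m < t$, of which there are exactly $\max(0,\,t-|\ell|m)$; this is the same triangular count already used for the term $D$ in the proof of Theorem~\ref{Fekete-Calculation}.

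Finally, since every term $c_n^2$ is nonnegative, discarding all shifts that are not multiples of $m$ gives
\[
||f||_4^4 = \sum_{n \in \Z} c_n^2 \;\ge\; \sum_{\ell \in \Z} c_{\ell m}^2 = \sum_{\ell \in \Z} \max(0,\,t-|\ell|m)^2,
\]
which is the asserted bound after renaming $\ell$ as $n$. I do not anticipate a genuine obstacle here: the argument is a short application of Parseval together with the elementary observation about correlations at multiples of the period. The only point that requires care is verifying the correct count of index pairs contributing to $c_{\ell m}$ and confirming that it agrees with $\max(0,\,t-|\ell|m)$, which is routine.
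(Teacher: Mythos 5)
Your proof is correct and follows essentially the same route as the paper: both identify $\|f\|_4^4$ with the sum of squares of the aperiodic autocorrelations (the paper phrases this via the coefficients of $f(z)f(z^{-1})$ in $\C[z,z^{-1}]$, which is the same Parseval computation you perform), evaluate the autocorrelation at each shift $nm$ as $\max(0,t-|n|m)$ using the periodicity hypothesis, and discard the remaining nonnegative terms. No gaps; your counting of the index pairs contributing at shift $\ell m$ matches the paper's.
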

\begin{proof}
Note that $\conj{f(z)}=f(z^{-1})$ for $z$ on the unit circle.
By treating $f(z)$ and $f(z^{-1})$ as formal elements of $\C[z,z^{-1}]$, it is straightforward to show that $||f||_4^4$ is the sum of the squares of the coefficients of $f(z) f(z^{-1})$.
For each $n \in \Z$, the coefficient of $z^{n m}$ in $f(z) f(z^{-1})$ is $$\sums{0 \leq j,k < t \\ j-k = n m} f_j f_k.$$ 
By the periodicity of the coefficients of $f$, this equals the number of pairs of integers $(j,k)$ in $[0,t)$ with $j-k=n m$, which is $\max(0,t-|n| m)$.
Sum the square of this over $n \in \Z$ to obtain the desired bound.
\end{proof}


\end{document}